\providecommand{\U}[1]{\protect\rule{.1in}{.1in}}
\newtheorem{theorem}{Theorem}
\newtheorem{corollary}[theorem]{Corollary}
\newtheorem{lemma}[theorem]{Lemma}
\DeclareMathOperator{\sgn}{sgn}
\begin{document}

\title[A counter-example]{Multilinear fractional integral operators: \\ a counter-example}
\author{Pablo Rocha}
\address{Departamento de Matem\'atica, Universidad Nacional del Sur, Av. Alem 1253 - Bah\'{\i}a Blanca 8000, Buenos
Aires, Argentina.}
\email{pablo.rocha@uns.edu.ar}
\thanks{\textbf{Key words and phrases}: Multilinear Fractional Operators, Hardy Spaces.}
\thanks{\textbf{2.010 Math. Subject Classification}: 42B20, 42B30.}

\begin{abstract} 
By means of a counter-example we show that the multilinear fractional operator $\mathcal{I}_{\gamma}$ ($1 < \gamma < 2$) is not bounded from $H^{1}(\mathbb{R}) \times H^{p}(\mathbb{R})$ into $H^{q}(\mathbb{R})$, for $0 < p \leq \gamma^{-1}$ and $\frac{1}{q}= 1 +\frac{1}{p} - \gamma$.
\end{abstract}
\maketitle

\section{Introduction}

Given positive integers $m, n$ and a real number $0 < \gamma < mn$, it is define the multilinear fractional operator $\mathcal{I}_{\gamma}$ by

\[
\mathcal{I}_{\gamma} (f_1, ..., f_m)(x) = \int_{(\mathbb{R}^{n})^{m}} \frac{f_1(y_1) \cdot \cdot \cdot f_m(y_m)}{(|x - y_1| + \cdot \cdot \cdot +|x-y_m|)^{nm - \gamma}} dy_1 \cdot \cdot \cdot dy_m, \,\,\,\,\, x \in \mathbb{R}^{n}.
\]

\

Y. Lin and S. Lu in \cite{Lin} proved Hardy space estimates for the multilinear fractional operator $\mathcal{I}_{\gamma}$. More precisely, they proved that if $0 < \gamma < n$, $0 < p_1, . . . , p_m, q \leq 1$, and $q$ such that $\frac{1}{q}= \frac{1}{p_1} + \cdot \cdot \cdot + \frac{1}{p_m} - \frac{\gamma}{n} > 0$, then
\[
\| \mathcal{I}_{\gamma}(f_1, ..., f_m) \|_{L^{q}} \leq C \| f_1 \|_{H^{p_1}} \cdot \cdot \cdot \| f_m \|_{H^{p_m}}.
\]
Recently, D. Cruz-Uribe, K. Moen and H. van Nguyen in \cite{Cruz-Uribe} generalized the result of Lin and Lu to weighted Hardy
spaces on the full range $0 < \gamma < nm$. 

The purpose of this note is to give a counter-example to show that the multilinear fractional operator $\mathcal{I}_{\gamma}$ is not bounded from a product of Hardy spaces into a Hardy space. For them, we consider $n=1$, $m=2$, $\gamma = \alpha +1$ with $0 < \alpha < 1$, so 
$1 < \gamma < 2$, $2 - \gamma = 1 - \alpha$ and the multilinear fractional operator $\mathcal{I}_{\alpha +1}$ in this case is given by
\[
\mathcal{I}_{\alpha +1}(f_1,f_2)(x) = \iint_{\mathbb{R}^{2}} \frac{f_1(s) f_2(t)}{(|x-s|+|x-t|)^{1-\alpha}} ds dt, \,\,\,\ x \in \mathbb{R}.
\]
We will prove that the operator $\mathcal{I}_{\alpha +1}$ is not bounded from $H^{1}(\mathbb{R}) \times H^{p}(\mathbb{R})$ into $H^{q}(\mathbb{R})$, for $0 < p \leq (\alpha + 1)^{-1}$ and $\frac{1}{q}= \frac{1}{p} - \alpha$. 

\

We briefly recall the definition of Hardy space on $\mathbb{R}^{n}$. The Hardy space $H^p(\mathbb{R}^n)$ (for $0 < p < \infty$) consists of tempered distributions $f \in \mathcal{S}'(\mathbb{R}^n)$ such that for some Schwartz function $\varphi$ with $\int \varphi = 1$, the maximal operator
$$
{\displaystyle (\mathcal{M}_{\varphi }f)(x)=\sup _{t>0}|(\varphi_{t}*f)(x)|}$$
is in $L^p(\mathbb{R}^n)$, where $\varphi_t(x):=\frac{1}{t^n}\varphi(\frac{x}{t})$.
In this case we define $\left\Vert  f \right\Vert_{H^{p}}:=\left\Vert \mathcal{M}_\varphi f \right\Vert _{p}$ as the $H^{p}$ ``norm". It can be shown that this definition does not depend on the choice of the function $\varphi$. 
For $1 < p < \infty$, it is well known that $H^{p}(\mathbb{R}^{n}) \cong L^{p}(\mathbb{R}^{n})$, $H^{1}(\mathbb{R}^{n}) \subset L^{1}(\mathbb{R}^{n})$ strictly, and for $0 < p < 1$ the spaces $H^{p}(\mathbb{R}^{n})$ and $L^{p}(\mathbb{R}^{n})$ are not comparable.

The following sentences hold in Hardy spaces $H^{p}(\mathbb{R}^{n})$ for $0 < p \leq 1$ (see pp. 128-129 in \cite{St}):

\

(S1) A bounded compactly supported function $f$ belongs to $H^{p}(\mathbb{R}^{n})$ if and only if it satisfies the moment conditions $\int x^{\beta} f(x) dx = 0$ for all $|\beta| \leq n (p^{-1} -1)$.

\

(S2) If $f \in L^{1}(\mathbb{R}^{n}) \cap H^{p}(\mathbb{R}^{n})$ then $\int  x^{\beta} f(x) dx = 0$, whenever $|\beta| \leq n (p^{-1} -1)$ and the function $x^{\beta} f(x)$ is in $L^{1}(\mathbb{R}^{n})$.

\

To obtain our result we will compute explicitly in Section 2 the Fourier transform of the kernel $\left(  |x-s|+|x-t| \right)^{\alpha - 1}$ in the $x$ variable, this allows us to get the following identity 
\[
\int_{\mathbb{R}} \mathcal{I}_{\alpha +1}(a_1, a_2)(x) dx = \frac{\alpha - 1}{\alpha} \iint_{\mathbb{R}^{2}} a_{1}(s) \, a_2(t) \, |t-s|^{\alpha} \, ds dt,
\]
valid for bounded functions $a_1$ and $a_2$ having compact support with $\int a_1 =0$ or $\int a_2 =0$. Then, from (S2), the counter-example will follow to consider $a_1 \in H^{1}(\mathbb{R})$ and $a_2 \in H^{p}(\mathbb{R})$ such that $\iint_{\mathbb{R}^{2}} a_{1}(s) \, a_2(t) \, |t-s|^{\alpha} \, ds dt \neq 0$.
 
\

\textbf{Notation:} We use the following convention for the Fourier transform in $\mathbb{R}$ $\widehat{f}(\xi) = \int f(x) e^{-ix \xi} \, dx$. As usual we denote with $\mathcal{S}(\mathbb{R})$ the Schwartz space on $\mathbb{R}$.

\section{Preliminaries}

We start with the following lemma.

\begin{lemma} For $0 < \alpha < 1$ and $s \neq t \in \mathbb{R}$ fixed, let $K_{s,t}^{\alpha}$ be the function defined in $\mathbb{R}$ by
\[
K_{s,t}^{\alpha}(x)= \left(  |x-s|+|x-t| \right)^{\alpha - 1}, \,\,\,\, x \in \mathbb{R}.
\]
Then 
\[
\widehat{K_{s,t}^{\alpha}}(\xi) = - 2^{\alpha} \, \Gamma(\alpha) \sin \left(\frac{(\alpha -1)\pi}{2} \right) e^{-i \xi(\frac{s+t}{2})}|\xi|^{-\alpha} + |t-s|^{\alpha - 1} \sgn{(t-s)} \int_{s}^{t} e^{-i x \xi} dx 
\]
\[
- \, \frac{|t-s|^{\alpha}}{\alpha} e^{-i \frac{(s+t)}{2} \xi} \cos \left(\frac{|t-s| \xi}{2} \right) \, + \, 
\frac{i \, 2^{\alpha} \, \xi e^{-i \xi(\frac{s+t}{2})}}{\alpha} 
\int_{0}^{\frac{|t-s|}{2}} x^{\alpha} \sin(x \xi) dx, 
\]
in the distributional sense.
\end{lemma}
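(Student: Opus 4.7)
The plan is to compute $\widehat{K_{s,t}^{\alpha}}(\xi)=\int_{\mathbb{R}}(|x-s|+|x-t|)^{\alpha-1}e^{-ix\xi}\,dx$ directly by splitting the real line into the three regions determined by the signs of $x-s$ and $x-t$. Assume first that $s<t$; the case $s>t$ will be recovered at the end by the symmetry of $K_{s,t}^{\alpha}$ in its two subscripts. On $(-\infty,s]$ one has $|x-s|+|x-t|=s+t-2x$, on $[s,t]$ it equals $t-s$, and on $[t,\infty)$ it equals $2x-s-t$.

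The middle region contributes $(t-s)^{\alpha-1}\int_s^t e^{-ix\xi}\,dx$, which is already the second term of the stated formula (with $\sgn(t-s)=+1$). For the two outer regions I would apply the changes of variable $y=s+t-2x$ on $(-\infty,s]$ and $y=2x-s-t$ on $[t,\infty)$; each integral becomes $\tfrac12 e^{-i\xi(s+t)/2}\int_{t-s}^{\infty}y^{\alpha-1}e^{\pm i\xi y/2}\,dy$, and their sum collapses to
\[
e^{-i\xi(s+t)/2}\int_{t-s}^{\infty} y^{\alpha-1}\cos(\xi y/2)\,dy.
\]

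Now I would split this tail as $\int_0^{\infty}-\int_0^{t-s}$. The first piece is the distributional Fourier cosine transform of the homogeneous function $y^{\alpha-1}_{+}$, obtained by analytic continuation of $\int_0^{\infty}y^{\alpha-1}e^{-zy}\,dy=\Gamma(\alpha)z^{-\alpha}$ to $z=i\xi/2$ (equivalently, by Abel regularization $\int_0^{\infty}y^{\alpha-1}e^{-\varepsilon y}\cos(\xi y/2)\,dy$ with $\varepsilon\to 0^{+}$); it equals $2^{\alpha}\Gamma(\alpha)\cos(\pi\alpha/2)|\xi|^{-\alpha}$ in the tempered distribution sense, a locally integrable function of $\xi$ since $0<\alpha<1$. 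The identity $\cos(\pi\alpha/2)=-\sin((\alpha-1)\pi/2)$ then converts this expression into the first term of the formula. The second, finite, piece $\int_0^{t-s}y^{\alpha-1}\cos(\xi y/2)\,dy$ I would handle by one integration by parts taking the primitive $y^{\alpha}/\alpha$: the boundary term at $y=0$ vanishes (because $\alpha>0$), the boundary term at $y=t-s$ provides the third summand of the formula, and the remaining $\int_0^{t-s}y^{\alpha}\sin(\xi y/2)\,dy$, after the substitution $y=2x$, produces the sine integral appearing in the fourth summand.

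The case $s>t$ is handled by symmetry: the first, third, and fourth terms of the formula are invariant under exchange of $s$ and $t$ (everything depends only on $|t-s|$), while $\int_s^t e^{-ix\xi}\,dx$ reverses sign, accounting exactly for the factor $|t-s|^{\alpha-1}\sgn(t-s)$ appearing in the second term. The one genuinely delicate point is the distributional meaning of the divergent homogeneous tail $\int_0^{\infty}y^{\alpha-1}\cos(\xi y/2)\,dy$: every other step is a classical convergent manipulation, so the main obstacle is to justify the identification of this tail with the homogeneous distribution $|\xi|^{-\alpha}$ by pairing against Schwartz test functions, which is where the qualifier ``in the distributional sense'' in the statement is needed.
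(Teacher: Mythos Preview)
Your proposal is correct and follows essentially the same route as the paper: assume $s<t$, split $\mathbb{R}$ into the three regions $(-\infty,s)$, $(s,t)$, $(t,\infty)$, handle the middle piece directly, and for the two outer pieces reduce to the (distributional) Fourier transform of the homogeneous function $|y|^{\alpha-1}$ minus a finite integral over $(0,\tfrac{t-s}{2})$ which is treated by one integration by parts; the case $s>t$ then follows by symmetry. The only cosmetic difference is that the paper carries a test function $\phi\in\mathcal{S}(\mathbb{R})$ throughout and invokes the tabulated formula $\widehat{|x|^{\alpha-1}}(\xi)=-2\Gamma(\alpha)\sin\!\big(\tfrac{(\alpha-1)\pi}{2}\big)|\xi|^{-\alpha}$, whereas you first combine the two outer tails into a single cosine integral and then identify $\int_0^{\infty}y^{\alpha-1}\cos(\xi y/2)\,dy$ by analytic continuation---the content is the same.
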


\begin{proof} First we assume that $s < t$. Then for each $\phi \in \mathcal{S}(\mathbb{R})$ fixed, we have
\[
\left(\widehat{K_{s,t}^{\alpha}}, \phi \right)= \left(K_{s,t}^{\alpha}, \widehat{\phi} \, \right) = \int_{\mathbb{R}} K_{s,t}^{\alpha}(x) 
\widehat{\phi}(x) dx
\]
\[
= \int_{t}^{+\infty}  K_{s,t}^{\alpha}(x) \widehat{\phi}(x) dx + \int_{s}^{t}  K_{s,t}^{\alpha}(x) \widehat{\phi}(x) dx +
\int_{-\infty}^{s}  K_{s,t}^{\alpha}(x) \widehat{\phi}(x) dx = I + II + III. 
\]
Let us now proceed to compute each one of these integrals,
\[
I = \int_{\mathbb{R}} \chi_{(t, +\infty)}(x) \left( 2x -(s+t) \right)^{\alpha -1}
\, \widehat{\phi}(x) dx
\]
\[
= 2^{\alpha - 1} \int_{\mathbb{R}}  x^{\alpha -1} \chi_{(\frac{t-s}{2}, +\infty)}(x)
\, (e^{-i(\cdot) \frac{(s+t)}{2}}\phi) \,\, \widehat{} \,\, (x) dx
\]
\[
= 2^{\alpha - 1} \int_{\mathbb{R}} x_{+}^{\alpha - 1} (e^{-i(\cdot) \frac{(s+t)}{2}}\phi) \,\, \widehat{} \,\, (x) dx - 2^{\alpha - 1} \int_{\mathbb{R}} x^{\alpha - 1}
\chi_{(0, \frac{t-s}{2})}(x) (e^{-i(\cdot) \frac{(s+t)}{2}}\phi) \,\, \widehat{} \,\, (x) dx 
\]
\[
=2^{\alpha - 1} \int_{\mathbb{R}} x_{+}^{\alpha - 1} (e^{-i(\cdot) \frac{(s+t)}{2}}\phi) \,\, \widehat{} \,\, (x) dx 
\]
\[
- 2^{\alpha - 1} \int_{\mathbb{R}} \left( \frac{(t-s)^{\alpha} e^{-i \xi \frac{(t-s)}{2}}}{2^{\alpha} \alpha} + \frac{i \xi}{\alpha}
\int_{0}^{\frac{t-s}{2}} x^{\alpha} e^{-ix \xi} dx \right) e^{-i \xi \frac{(s+t)}{2}}\phi(\xi) d\xi,
\]
to compute $III$ we proceed as in $I$, thus
\[
III = 2^{\alpha - 1} \int_{\mathbb{R}} x_{-}^{\alpha - 1} (e^{-i(\cdot) \frac{(s+t)}{2}}\phi) \,\, \widehat{} \,\, (x) dx 
\]
\[
- 2^{\alpha - 1} \int_{\mathbb{R}} \left( \frac{(t-s)^{\alpha}}{2^{\alpha}\alpha}e^{i \xi (\frac{t-s}{2})} - \frac{i \xi}{\alpha}
\int_{0}^{\frac{t-s}{2}} x^{\alpha} e^{ix \xi} dx \right) e^{-i \xi \frac{(s+t)}{2}}\phi(\xi) d\xi,
\]
so
\[
I + III= 2^{\alpha - 1} \int_{\mathbb{R}} |x|^{\alpha - 1} (e^{-i(\cdot) \frac{(s+t)}{2}}\phi) \,\, \widehat{} \,\, (x) dx 
\]
\[
- \, \int_{\mathbb{R}} \frac{(t-s)^{\alpha}}{\alpha} e^{-i \frac{(s+t)}{2} \xi} \cos \left(\frac{(t-s) \xi}{2} \right) \phi(\xi) d\xi
\]
\[ 
+ \, \int_{\mathbb{R}} \left( \frac{i \, 2^{\alpha} \, \xi e^{-i \xi(\frac{s+t}{2})}}{\alpha} 
\int_{0}^{\frac{(t-s)}{2}} x^{\alpha} \sin(x \xi) dx \right) \phi(\xi) d\xi. 
\]
Now $II$ is easy, indeed 
\[
II = \int_{\mathbb{R}} \chi_{(s,t)}(x) (t-s)^{\alpha - 1} \, \widehat{\phi}(x) dx
= \int_{\mathbb{R}} \left( (t-s)^{\alpha - 1} \int_{s}^{t} e^{-ix \xi} dx \right) \phi(\xi) d\xi.
\]
Since
\[
\widehat{|x|^{\alpha -1}}(\xi) = -2 \Gamma(\alpha) \sin \left(\frac{(\alpha -1)\pi}{2} \right) |\xi|^{-\alpha}
\]
(see equation (12), pp. 173, in \cite{Gelfand}), the lemma follows for the case $s < t$. Finally, exchanging the roles of $s$ and $t$ we obtain the statement of the lemma.
\end{proof}

\begin{corollary} If $a_1$ and $a_2$ are two bounded functions on $\mathbb{R}$ with compact support and such that $\int a_1 = 0$ or $\int a_2 = 0$, then
\[
\int_{\mathbb{R}} \mathcal{I}_{\alpha +1}(a_1, a_2)(x) dx = \frac{\alpha - 1}{\alpha} \iint_{\mathbb{R}^{2}} a_{1}(s) \, a_2(t) \, |t-s|^{\alpha} \, ds dt.
\]
\end{corollary}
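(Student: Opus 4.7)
The plan is to interpret $\int \mathcal{I}_{\alpha+1}(a_1,a_2)(x)\,dx$ as the Fourier transform of $\mathcal{I}_{\alpha+1}(a_1,a_2)$ evaluated at the origin, apply Fubini and Lemma~1 to rewrite this as a double integral against $a_1(s)a_2(t)$ of $\widehat{K_{s,t}^{\alpha}}(\xi)$, and let $\xi\to 0$. The moment hypothesis is used twice: it guarantees that $\mathcal{I}_{\alpha+1}(a_1,a_2)\in L^{1}(\mathbb{R})$ (so that its Fourier transform is continuous at $0$) and it cancels the singular $|\xi|^{-\alpha}$ contribution produced by Lemma~1.

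I would first verify the $L^{1}$-membership. For $|x|$ much larger than the supports of $a_1,a_2$, a Taylor expansion in $(s,t)/|x|$ gives $K_{s,t}^{\alpha}(x)=(2|x|)^{\alpha-1}\bigl(1+O((|s|+|t|)/|x|)\bigr)$; the hypothesis $\int a_1=0$ (or $\int a_2=0$) kills the leading constant contribution, yielding the pointwise bound $|\mathcal{I}_{\alpha+1}(a_1,a_2)(x)|=O(|x|^{\alpha-2})$, which is integrable at infinity since $\alpha<1$. Local integrability is immediate from $\alpha-1>-1$. Consequently $\widehat{\mathcal{I}_{\alpha+1}(a_1,a_2)}$ is continuous and $\int\mathcal{I}_{\alpha+1}(a_1,a_2)(x)\,dx=\widehat{\mathcal{I}_{\alpha+1}(a_1,a_2)}(0)$.

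Next, a Fubini interchange (justified by truncating the spatial integral to $|x|\le R$ and passing to the limit with the $L^{1}$-bound above) gives, for $\xi\ne 0$,
\[
\widehat{\mathcal{I}_{\alpha+1}(a_1,a_2)}(\xi)=\iint a_1(s)a_2(t)\,\widehat{K_{s,t}^{\alpha}}(\xi)\,ds\,dt,
\]
into which I substitute the four-term formula from Lemma~1. The first term contributes
\[
-2^{\alpha}\Gamma(\alpha)\sin\!\left(\tfrac{(\alpha-1)\pi}{2}\right)|\xi|^{-\alpha}\,\widehat{a_1}(\xi/2)\widehat{a_2}(\xi/2);
\]
since $a_1,a_2$ are compactly supported their Fourier transforms are smooth, and the moment hypothesis forces $\widehat{a_1}(0)=0$ or $\widehat{a_2}(0)=0$, so this expression is $O(|\xi|^{1-\alpha})\to 0$. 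The remaining three terms are continuous at $\xi=0$, where they evaluate respectively to $|t-s|^{\alpha-1}\sgn(t-s)(t-s)=|t-s|^{\alpha}$, $-|t-s|^{\alpha}/\alpha$, and $0$ (the last because of the explicit prefactor $\xi$).

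Summing the regular contributions gives $\tfrac{\alpha-1}{\alpha}|t-s|^{\alpha}$, and integrating against $a_1(s)a_2(t)$ produces the stated identity. The main obstacle is the pairing of the Fubini step with the $\xi\to 0$ limit, since neither $K_{s,t}^{\alpha}$ nor $\widehat{K_{s,t}^{\alpha}}$ is globally integrable; these are handled respectively by the spatial truncation (with dominated convergence supplied by the $O(|x|^{\alpha-2})$ bound) and by the moment-induced cancellation of the $|\xi|^{-\alpha}$ term.
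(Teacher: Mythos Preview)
Your approach is essentially the paper's: both establish $\mathcal{I}_{\alpha+1}(a_1,a_2)\in L^{1}$, invoke Lemma~1, use the moment hypothesis to annihilate the singular $|\xi|^{-\alpha}$ contribution, and read off the value $\tfrac{\alpha-1}{\alpha}\,|t-s|^{\alpha}$ from the three regular terms at the origin. The only difference is in the regularization: the paper pairs against an even Schwartz function $\varphi_{\epsilon}(x)=\varphi(\epsilon x)\to 1$ and sends $\epsilon\to 0$ (so that Lemma~1's distributional identity applies verbatim and no pointwise identification of $\widehat{K_{s,t}^{\alpha}}$ is needed), whereas you evaluate $\widehat{\mathcal{I}_{\alpha+1}(a_1,a_2)}(\xi)$ for $\xi\neq 0$ and use continuity at $0$.
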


\begin{proof} It is easy to check that $\mathcal{I}_{\alpha +1}(a_1, a_2)(\cdot) \in L^{1}(\mathbb{R})$. Let $\varphi \in \mathcal{S}(\mathbb{R})$ be an {\bf even} function such that $\varphi(0)=1$ and for $\epsilon > 0$ let $\varphi_{\epsilon}(x) = \varphi(\epsilon x)$. Since
\[
\int_{\mathbb{R}} \mathcal{I}_{\alpha +1}(a_1, a_2)(x) dx = 
\lim_{\epsilon \to 0^{+}} \int_{\mathbb{R}} \mathcal{I}_{\alpha +1}(a_1, a_2)(x) \varphi_{\epsilon}(x)dx,
\]
we will proceed to compute this limit.
\[
\lim_{\epsilon \to 0^{+}} \int_{\mathbb{R}} \mathcal{I}_{\alpha +1}(a_1, a_2)(x) \varphi_{\epsilon}(x)dx =
\lim_{\epsilon \to 0^{+}} \iint_{\mathbb{R}^{2}} a_1(s) a_2(t) \left(\int_{\mathbb{R}} K_{s,t}^{\alpha}(x) \varphi_{\epsilon}(x)dx \right)dsdt
\]
\[
= \lim_{\epsilon \to 0^{+}} \iint_{\mathbb{R}^{2}} a_1(s) a_2(t) \left(\int_{\mathbb{R}} \widehat{K_{s,t}^{\alpha}}(\xi) 
\widehat{\varphi_{\epsilon}}(\xi)d\xi \right)dsdt
\]
\[
= \iint_{\mathbb{R}^{2}} a_1(s) a_2(t) \lim_{\epsilon \to 0^{+}} \left(\int_{\mathbb{R}} \left( \widehat{K_{s,t}^{\alpha}}(\epsilon \, \xi) + 
2^{\alpha} \, \Gamma(\alpha) \sin \left(\frac{(\alpha -1)\pi}{2} \right) |\epsilon \xi|^{-\alpha} \right) \widehat{\varphi}(\xi)d\xi \right)dsdt
\]
\[
= \frac{\alpha - 1}{\alpha} \iint_{\mathbb{R}^{2}} a_{1}(s) \, a_2(t) \, |t-s|^{\alpha} \, ds dt,
\]
where the third equality follows from the moment condition of $a_1$ (or $a_2$) and the last one from Lemma 1 and that $\varphi(0)=1$.
\end{proof}

\section{A Counter-example}

We take $a_1(s) = \chi_{(-1,0)}(s) - \chi_{(0,1)}(s)$ and $a_2(t) = a_1(t-2)$. From (S1) it follows that $a_1 \in H^{1}(\mathbb{R})$ and 
$a_2 \in H^{(\alpha +1)^{-1}}(\mathbb{R})$ for each $0 < \alpha <1$. A computation gives

\[
\iint a_1(s) a_2(t) |t-s|^{\alpha} ds dt = \frac{4 \cdot 3^{\alpha + 2} - 4^{\alpha +2} - 6 \cdot 2^{\alpha +2} +4}{(\alpha +1) (\alpha +2)} 
\neq 0.
\]
${}$\\
From (S2) and corollary 2 it obtains that $\mathcal{I}_{\alpha +1}(a_1, a_2)(\cdot) \notin H^{1}(\mathbb{R})$, for each $0 < \alpha <1$. 
For $0 < p < (\alpha+1)^{-1}$ and $\frac{1}{q} = \frac{1}{p} - \alpha$, we take $N$ as any fixed integer with $N > p^{-1}-1$, then the set of all bounded, compactly supported functions for which $\int x^{\beta} f(x) dx =0$, for all $|\beta| \leq N$ is dense in $H^{r}(\mathbb{R})$ for each $p \leq r \leq 1$ (see 5.2 b), pp. 128, in \cite{St}). In particular, there exists $b \in H^{p}(\mathbb{R})$ such that $\|a_1 \|_{H^{1}} \| a_2 - b \|_{H^{(\alpha+1)^{-1}}} < \left| \int_{\mathbb{R}} \mathcal{I}_{\alpha+1}(a_1, a_2) (x) dx \right|/2C$. Then
\[
\left| \int_{\mathbb{R}} \mathcal{I}_{\alpha+1}(a_1, b) (x) dx \right| \geq \left| \int_{\mathbb{R}} \mathcal{I}_{\alpha+1}(a_1, a_2) (x) dx \right| -  \int_{\mathbb{R}} | \mathcal{I}_{\alpha+1}(a_1, a_2 - b) (x) | dx 
\]
\[
\geq \left| \int_{\mathbb{R}} \mathcal{I}_{\alpha+1}(a_1, a_2) (x) dx \right| - C \|a_1 \|_{H^{1}} \| a_2 - b \|_{H^{(\alpha+1)^{-1}}} 
\]
\[
> \frac{1}{2}\left| \int_{\mathbb{R}} \mathcal{I}_{\alpha+1}(a_1, a_2) (x) dx \right| > 0
\]
where the second inequality follows from Theorem 1.1 in \cite{Cruz-Uribe} with $p_1 = 1$, $p_2 = (\alpha+1)^{-1}$ and $q=1$. But then the operator $\mathcal{I}_{\alpha + 1}$ is not bounded from $H^{1}(\mathbb{R}) \times H^{p}(\mathbb{R})$ into $H^{q}(\mathbb{R})$ for $0 < p \leq (\alpha + 1)^{-1}$ and $\frac{1}{q}= \frac{1}{p} - \alpha$, since $\int_{\mathbb{R}} \mathcal{I}_{\alpha+1}(a_1, b) (x) dx \neq 0$.

\qquad

We conclude this note by summarizing our main result in the following theorem.

\begin{theorem} For $1 < \gamma < 2$, let $\mathcal{I}_{\gamma}$ be the multilinear fractional integral operator given by
\[
\mathcal{I}_{\gamma}(f_1,f_2)(x) = \iint_{\mathbb{R}^{2}} \frac{f_1(s) f_2(t)}{(|x-s|+|x-t|)^{2-\gamma}} ds dt, \,\,\,\, x \in \mathbb{R}.
\]
Then the operator $\mathcal{I}_{\gamma}$ is not bounded from $H^{1}(\mathbb{R}) \times H^{p}(\mathbb{R})$ into $H^{q}(\mathbb{R})$ for \,\,\,\,\,\,\,\,\,\, $0 < p \leq \gamma^{-1}$ and $\frac{1}{q}= 1 +\frac{1}{p} - \gamma$.
\end{theorem}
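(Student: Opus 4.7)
The plan is to change variables to the parameter $\alpha = \gamma - 1 \in (0,1)$ used throughout the preceding sections: then $2-\gamma = 1-\alpha$, the condition $\frac{1}{q} = 1 + \frac{1}{p} - \gamma$ becomes $\frac{1}{q} = \frac{1}{p} - \alpha$, and the theorem reduces to the unboundedness of $\mathcal{I}_{\alpha+1}$ from $H^1(\mathbb{R}) \times H^p(\mathbb{R})$ into $H^q(\mathbb{R})$ for $0 < p \leq (\alpha+1)^{-1}$. Once reformulated, everything follows by combining Corollary 2 (the explicit integral identity) with property (S2) (the vanishing-moment necessary condition for membership in $H^q$, $q \leq 1$), plus a density argument to move from the endpoint $p = (\alpha+1)^{-1}$ down to the whole subcritical range.

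First I would dispose of the endpoint case $p = (\alpha+1)^{-1}$, $q = 1$. Taking $a_1(s) = \chi_{(-1,0)}(s) - \chi_{(0,1)}(s)$ and $a_2(t) = a_1(t-2)$, property (S1) places $a_1 \in H^1(\mathbb{R})$ and $a_2 \in H^{(\alpha+1)^{-1}}(\mathbb{R})$. A routine piecewise integration shows $\iint a_1(s) a_2(t) |t-s|^\alpha \, ds \, dt \neq 0$; since $\int a_1 = 0$, Corollary 2 then gives $\int_{\mathbb{R}} \mathcal{I}_{\alpha+1}(a_1, a_2)(x) \, dx \neq 0$. But $\mathcal{I}_{\alpha+1}(a_1, a_2) \in L^1(\mathbb{R})$, so (S2) forbids it from belonging to $H^1(\mathbb{R})$.

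For the subcritical range $0 < p < (\alpha+1)^{-1}$, hence $q < 1$, I would perturb $a_2$ into $H^p(\mathbb{R})$. Fix an integer $N > p^{-1} - 1$; the subspace of bounded, compactly supported functions $f$ with $\int x^\beta f = 0$ for all $|\beta| \leq N$ sits inside $H^r(\mathbb{R})$ for every $p \leq r \leq 1$ and is dense in each such $H^r$. Pick $b$ from this subspace so that $\|a_2 - b\|_{H^{(\alpha+1)^{-1}}}$ is as small as desired. The known $(H^1 \times H^{(\alpha+1)^{-1}}) \to L^1$ bound from \cite{Cruz-Uribe} then controls $\int |\mathcal{I}_{\alpha+1}(a_1, a_2-b)|$, and the triangle inequality preserves $\int \mathcal{I}_{\alpha+1}(a_1, b) \neq 0$. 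Since $\mathcal{I}_{\alpha+1}(a_1, b) \in L^1(\mathbb{R})$ has nonzero integral and $q \leq 1$, property (S2) again excludes $H^q(\mathbb{R})$.

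The real work was already carried out in Section 2, specifically the explicit distributional Fourier-transform computation of Lemma 1, which powers the integral identity of Corollary 2; that is the main obstacle in the paper. Given those tools, the theorem itself is a bookkeeping exercise: produce one explicit witness at the endpoint, then propagate the obstruction to all smaller $p$ by density together with a known Lebesgue-space bound.
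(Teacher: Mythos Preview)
Your proposal is correct and follows essentially the same route as the paper: the identical counter-example $a_1=\chi_{(-1,0)}-\chi_{(0,1)}$, $a_2=a_1(\cdot-2)$ at the endpoint, Corollary~2 combined with (S2) to rule out $H^1$, and then the same density-plus-\cite{Cruz-Uribe} perturbation to reach smaller $p$. There is no meaningful difference between your argument and the paper's.
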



\end{document}